\newtheorem{theo}{Theorem}[section]
\newtheorem{coro}[theo]{Corollary}
\newtheorem{lemma}[theo]{Lemma}
\newtheorem{prop}[theo]{Proposition}
\theoremstyle{definition}
\theoremstyle{definition}
\newtheorem{rem}[theo]{Remark}
 \newcommand\FF{{\mathbb{F}}}
 \newcommand\lk{{\mathrm{lk}}}
 \DeclareMathOperator{\im}{Im}
 \DeclareMathOperator{\ct}{ct}
 \DeclareMathOperator{\supp}{supp}
 \DeclareMathOperator{\Hom}{Hom}
\title {The covering type of closed surfaces and minimal triangulations}
   \author{Eugenio Borghini}
   \author{El\'ias Gabriel Minian}
   \address{Departamento  de Matem\'atica - IMAS\\
 FCEyN, Universidad de Buenos Aires. Buenos Aires, Argentina.}
\email{eborghini@dm.uba.ar ; gminian@dm.uba.ar}
\subjclass[2010]{57M20, 57Q15, 52B70, 57N16, 55M30, 55P15.}
\keywords{Covering type, minimal triangulations, surfaces.}
\begin{document}

   \begin{abstract}
  The notion of covering type was recently introduced by Karoubi and Weibel to measure the complexity of a topological space by means of good coverings. When $X$ has the homotopy type of a finite CW-complex, its covering type coincides with the minimum possible number of vertices of a simplicial complex homotopy equivalent to $X$. In this article we compute the covering type of all closed surfaces. Our results completely settle a problem posed by Karoubi and Weibel, and shed more light on the relationship between the topology of surfaces and the number of vertices of minimal triangulations from a homotopy point of view.
   \end{abstract}

   \maketitle

\section{Introduction}

It is well known that every smooth manifold admits a triangulation, but in general it is a hard problem to find minimal triangulations, i.e. triangulations with minimum number of vertices. In the case of surfaces, lower bounds on the number of vertices in terms of the Euler characteristic were known since the nineteenth century \cite{Hea} (see also \cite{Lut}). A complete solution to the problem of minimal triangulations of closed surfaces was given by Ringel \cite{Rin}, in the non-orientable case, and by Jungerman and Ringel \cite{JR}, in the orientable case. 

\begin{theo}\label{jungerman}
(Jungerman and Ringel) Let $S$ be a closed surface different from the orientable surface of genus $2$ ($M_2$), the Klein bottle ($N_2$) and the non-orientable surface of genus $3$ ($N_3$). There exists a triangulation of $S$ with $n$ vertices if and only if
\[
	n \geq \frac{7 + \sqrt{49-24\chi(S)}}{2}.
    \]
 For the exceptional cases $M_2, N_2$ and $N_3$, it is necessary to replace $n$ by $n - 1$ in the formula (see \cite{Hun,Lut}).
\end{theo}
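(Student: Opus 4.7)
The plan is to split the biconditional into two directions. For the \emph{lower bound}, given any triangulation of $S$ with $n$ vertices, $e$ edges and $f$ triangles, combine Euler's formula $n - e + f = \chi(S)$ with the double-counting identity $3f = 2e$ to obtain $e = 3(n - \chi(S))$, and then use the simplicial bound $e \le \binom{n}{2}$ (at most one edge between any two vertices). The resulting quadratic inequality $n^{2} - 7n + 6\chi(S) \ge 0$ solves to yield exactly Heawood's bound $n \ge \frac{7 + \sqrt{49 - 24\chi(S)}}{2}$.

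For the \emph{upper bound}, one must construct, for each admissible $n$, an actual triangulation of $S$ with $n$ vertices. When $n$ equals the Heawood value $n_0$, equality $e = \binom{n_0}{2}$ is forced, so a minimal triangulation is precisely a triangular embedding of the complete graph $K_{n_0}$ into $S$. The plan is to import the constructive machinery of Ringel and collaborators from the solution of the Heawood map colour problem: build current graphs (or equivalently voltage graphs on small quotient graphs) whose rotation systems encode the desired simplicial embedding of $K_{n_0}$ in $S$. The different residues of $n_0$ modulo $12$ (orientable case) and modulo smaller periods (non-orientable case) are handled by separate families of current graphs, with finitely many low genus surfaces dispatched by explicit ad hoc constructions. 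Once a minimal triangulation exists, triangulations of $S$ with any $n > n_0$ vertices follow by iterated vertex splits, each of which increases the vertex count by one while preserving the underlying surface.

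For the three exceptional surfaces $M_2$, $N_2$, $N_3$, the Heawood bound is provably not attained, so the plan requires an additional obstruction argument: a hypothetical triangular embedding of $K_{n_0}$ in such an $S$ would force local combinatorial configurations (forbidden subcomplexes in the vertex links, or parity conflicts in a rotation system) incompatible with the orientability type or with $\chi(S)$, hence cannot exist. One then exhibits explicit triangulations on $n_0 + 1$ vertices, from which vertex splits again generate all larger values.

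The \emph{main obstacle}, without question, is the constructive upper bound for generic surfaces: the lower bound is a short Euler characteristic calculation, and the non-realisability for the three exceptional cases is a short combinatorial obstruction, but producing the infinite families of triangular $K_{n_0}$-embeddings is the deep content of the Ringel--Youngs and Jungerman--Ringel program and must be invoked as a black box rather than reproved.
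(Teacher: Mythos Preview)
The paper does not prove this theorem at all: Theorem~\ref{jungerman} is quoted from the literature as background (with attributions to Ringel, Jungerman--Ringel, Huneke, and Lutz) and is used only through the quantity $\rho(S)$ defined immediately afterwards. There is therefore no proof in the paper to compare your proposal against.

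That said, your outline is a faithful summary of how the cited references actually establish the result: the Heawood lower bound via $n-e+f=\chi(S)$, $3f=2e$ and $e\le\binom{n}{2}$; the constructive upper bound via current/voltage graph embeddings of $K_{n_0}$ organised by residue classes; vertex splits to reach all larger $n$; and separate obstruction arguments plus explicit $(n_0+1)$-vertex triangulations for $M_2$, $N_2$, $N_3$. Your assessment of the main obstacle is also correct --- the embedding constructions are the substantial part and are not reproducible in a short argument. Just be aware that in the context of this paper no proof is expected; the theorem is simply cited.
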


Here $\chi(S)$ denotes the Euler characteristic of $S$. More generally, the problem of determining small triangulations of PL manifolds was extensively investigated by a number of authors (see for example \cite{BK,BK2,Lut,Lut2} and references therein).

It seems evident that the problem is much harder if one wants to determine the minimum number of vertices of triangulations up to homotopy type. This problem is closely related to the notion of covering type, which was recently introduced by Karoubi and Weibel \cite{KW}.

Recall that an open cover ${\mathcal U} = \{ U_i \}_{i \in I}$ of a topological space $X$ is a \textit{good cover} if every nonempty intersection $U_{i_1 } \cap \dots \cap U_{i_n}$ is contractible. The \textit{strict covering type} of $X$ is the minimum number of elements in a good cover of $X$. The \textit{covering type} of $X$, denoted by $\ct(X)$, is the minimum of the strict covering types taken over all spaces $X'$ homotopy equivalent to $X$. Good covers appear naturally in the context of Riemannian manifolds: any point in a Riemannian manifold has a geodesically convex neighborhood (which, in particular, is contractible) and the intersections of such neighborhoods are again geodesically convex. On one hand the notion of covering type is a refinement of the classical  concept of Lusternik-Schnirelmann category and, on the other hand, it is related to the number of vertices of minimal triangulations up to homotopy. Concretely,  if $X$ has the homotopy type of a finite CW-complex, $\ct(X)$ coincides with the minimum possible number of vertices of a simplicial complex homotopy equivalent to $X$ (see Lemma \ref{vertices} below).

In this paper we prove that for any closed surface $S$, its covering type coincides with the number of vertices in a minimal triangulation of $S$, with the exception of the orientable surface $M_2$. In particular our result asserts that, with the exception of $M_2$, a minimal triangulation of the homotopy type of $S$ can be attained in a triangulation of the surface itself. In the case of $M_2$, we prove that $\ct(M_2)=9$ (one less than the number of vertices of a minimal triangulation of $M_2$) and exhibit a simplicial complex homotopy equivalent to the surface with the minimum number of vertices of its homotopy type. 

Surprisingly the only tools required to prove these results are classical and basic: the Euler characteristic and the structure of the cohomology ring of the surface.

\section{Main result}

In this paper all the surfaces that we consider are closed and all the simplicial complexes are finite. Given a surface $S$, we denote by $\delta(S)$ the number of vertices in a minimal triangulation of $S$, and by $\rho(S)$ the minimum integer $n$ satisfying the inequality 
\[
	n \geq \frac{7 + \sqrt{49-24\chi(S)}}{2}
    \]
of Theorem \ref{jungerman}. Thus $\rho(S)$ coincides with $\delta(S)$ except for the exceptional cases $M_2, N_2, N_3$, in which cases we have $\rho(S) = \delta(S) - 1$.
 
In this section we prove the main result of the article: a simplicial complex $K$ homotopy equivalent to a surface $S$ has at least $\rho(S)$ vertices. This immediately implies that $\ct(S)=\delta(S)$ for the non-exceptional cases. The exceptional cases $M_2, N_2, N_3$ are studied separately in the last section of the paper.

We first describe the covering type in terms of minimum number of vertices.

\begin{lemma}\label{vertices}
Let $X$ be a topological space homotopy equivalent to a finite CW-complex. Then $\ct(X)$ coincides with the minimum possible number of vertices of a simplicial complex $K$ homotopy equivalent to $X$.
\end{lemma}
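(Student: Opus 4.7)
The proof proceeds by establishing the two inequalities separately, with the lower bound resting on the Nerve Theorem and the upper bound on the open star cover of a simplicial complex.

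For the upper bound $\ct(X) \le |K^{(0)}|$, the plan is as follows. Suppose $K$ is a finite simplicial complex with vertex set $\{v_1,\dots,v_n\}$ and $|K| \simeq X$. Consider the family $\mathcal{U} = \{\st(v_i)\}_{i=1}^n$ of open stars in $|K|$. Each open star is the union of the (relatively) open simplices of $K$ having $v_i$ as a vertex, and deformation retracts to $v_i$. More generally, a finite intersection $\st(v_{i_1})\cap\cdots\cap\st(v_{i_k})$ is nonempty precisely when $\{v_{i_1},\dots,v_{i_k}\}$ spans a simplex $\sigma$ of $K$, and in that case the intersection is the open star of $\sigma$, which deformation retracts to the barycenter of $\sigma$ and hence is contractible. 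Thus $\mathcal{U}$ is a good cover of $|K|$ with $n$ elements, and since $|K| \simeq X$ we conclude that $\ct(X) \le n$.

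For the lower bound, suppose $X' \simeq X$ admits a good cover $\mathcal{U} = \{U_1,\dots,U_m\}$ realizing the covering type, so $m = \ct(X)$. Form the nerve $N(\mathcal{U})$: this is a simplicial complex whose vertex set is $\{U_1,\dots,U_m\}$ and whose simplices are the subfamilies with nonempty common intersection. By the Nerve Theorem (in the form valid for good open covers of paracompact spaces), $|N(\mathcal{U})| \simeq X' \simeq X$. Since $N(\mathcal{U})$ has exactly $m$ vertices, the minimum number of vertices of a simplicial complex homotopy equivalent to $X$ is at most $\ct(X)$.

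Combining the two inequalities gives the equality. I would also include a brief remark to justify that the minimum on the right-hand side is actually attained: the hypothesis that $X$ has the homotopy type of a finite CW-complex ensures that at least one finite simplicial model exists (any finite CW-complex can be replaced up to homotopy by a finite simplicial complex, for instance by taking a simplicial approximation of the attaching maps, or more directly by observing that a finite CW-complex is homotopy equivalent to the geometric realization of a finite simplicial set whose subdivision is simplicial), so the set of vertex counts is a nonempty subset of $\NN$ and has a minimum.

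The only step that deserves care is the invocation of the Nerve Theorem, since this is the non-elementary input; the rest is a direct verification. The open star construction on the other side is completely routine, so no real obstacle is anticipated once the Nerve Theorem is cited.
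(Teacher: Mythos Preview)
Your proof is correct and follows exactly the same strategy as the paper: one inequality via the Nerve Theorem applied to a minimal good cover, the other via the open star cover of a simplicial model. You supply more detail (on why star intersections are contractible and why a finite simplicial model exists), but the argument is the same.
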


\begin{proof}
Suppose $\ct(X)=r$ and let $Y$ be a space homotopy equivalent to $X$ which admits a good cover ${\mathcal U}$ of size $r$. By the Nerve Theorem, $Y$ is homotopy equivalent to the nerve ${\mathcal N}({\mathcal U})$, which is a simplicial complex with $r$ vertices. On the other hand, if $K$ is a simplicial complex homotopy equivalent to $X$, the open stars of its vertices form a good cover of $K$.
\end{proof}

We will prove first a special case of the main theorem. Concretely, we will show that any simplicial complex of dimension $2$ with the Euler characteristic of a surface $S$ and whose cohomology ring satisfies certain regularity property, has at least $\rho(S)$ vertices. First we need a lemma. Recall that a simplicial complex of dimension $n$ is \textit{homogeneous} or \textit{pure} if all its maximal simplices have dimension $n$.

\begin{lemma}\label{dim2}
Let $K$ be a homogeneous simplicial complex of dimension 2 with no free faces and let $S$ be a surface. Suppose that $\chi(K) = \chi(S)$. Then, $K$ has at least $\rho(S)$ vertices.
\end{lemma}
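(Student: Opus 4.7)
The plan is to recover the classical Heawood-style lower bound from the combinatorial hypotheses on $K$. Let $n$, $e$, $f$ be the number of vertices, edges, and triangles of $K$, so that $\chi(K)=n-e+f$.

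The first step exploits the hypothesis that $K$ is pure of dimension $2$ with no free faces. Purity forces every edge to lie in some triangle, and the absence of free faces rules out any edge lying in exactly one triangle. Hence every edge belongs to at least two triangles, and double-counting edge-triangle incidences yields $3f\ge 2e$.

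The second step combines this with Euler characteristic and the crude upper bound $e\le\binom{n}{2}$ valid for any simplicial complex on $n$ vertices. Substituting $f\ge\tfrac{2}{3}e$ into $\chi(S)=\chi(K)=n-e+f$ gives $\chi(S)\ge n-\tfrac{1}{3}e$, whence $e\ge 3(n-\chi(S))$. Combined with $e\le\binom{n}{2}$, this yields
\[
3(n-\chi(S))\le\binom{n}{2},
\]
which rearranges to $n^2-7n+6\chi(S)\ge 0$. Taking the larger root of the quadratic gives $n\ge\tfrac{1}{2}\bigl(7+\sqrt{49-24\chi(S)}\bigr)$, which is exactly the defining inequality for $\rho(S)$.

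This is essentially Heawood's original count, now applied to a class of complexes that need not be triangulated surfaces, so I do not foresee a serious obstacle. The one piece of actual content is recognizing that "no free faces" together with purity is precisely what forces each edge to meet at least two triangles; everything else is bookkeeping. For the very small values of $\chi(S)$ one might want a brief sanity check that the smaller root of the quadratic does not offer an alternative solution, but this root is nonpositive for $\chi(S)\le 0$ and is easily eliminated by direct inspection in the remaining low-genus cases.
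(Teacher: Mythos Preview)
Your argument is correct and matches the paper's proof essentially line for line: both derive $3f\ge 2e$ from purity plus the absence of free faces, combine it with $e\le\binom{n}{2}$ and the Euler characteristic to obtain $n^2-7n+6\chi(S)\ge 0$, and read off $n\ge\rho(S)$. Your explicit remark about discarding the smaller root is a slight elaboration on what the paper leaves implicit, but otherwise the proofs are the same.
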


\begin{proof}
Call $\alpha_0$, $\alpha_1$, $\alpha_2$ the number of vertices, edges and triangles of $K$ respectively. We know that
\[
	\chi(S) = \chi(K) = \alpha_0 - \alpha_1 + \alpha_2.
\]
Since $K$ has no free faces and no maximal edges, $3\alpha_2 \geq 2\alpha_1$. On the other hand, since $K$ is a simplicial complex, it has at most ${{\alpha_0}\choose{2}}$ edges. Then
\[
	6\chi(S) = 6\chi(K) \geq 6\alpha_0 - \alpha_0(\alpha_0 - 1).
\]
The minimum integer that satisfies this inequality is precisely $\rho(S)$, and therefore $\alpha_0 \geq \rho(S)$.
\end{proof}

\begin{rem}\label{property}
If $S$ is a surface, it is known that its cohomology ring $H^{*}(S;\FF_2)$, with coefficients in $\FF_2$, satisfies the following property:
\[
\text{for every } \alpha \neq 0 \text{ in } H^{1}(S;\FF_2) \text{, there exists } \beta \in H^{1}(S;\FF_2) \text{ such that } \alpha \cup \beta \neq 0 \text{ in } H^{2}(S;\FF_2). \label{eq:cohomprop} \tag{A}
\]
\end{rem}

Hereafter, the coefficient ring for homology and cohomology groups will be understood to be $\FF_2$. 

\begin{coro}\label{corodim2}
Let $K$ be a simplicial complex of dimension 2 and let $S$ be a surface. Suppose that $\chi(K) = \chi(S)$. If additionally $H^{*}(K)$ satisfies property \eqref{eq:cohomprop} of Remark \ref{property}, then $K$ has at least $\rho(S)$ vertices.
\end{coro}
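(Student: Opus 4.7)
The plan is to reduce $K$ to a simplicial complex fulfilling the hypotheses of Lemma~\ref{dim2}---pure of dimension $2$ with no free faces, the same Euler characteristic, and no more vertices---so that Lemma~\ref{dim2} closes the argument. The reduction proceeds in two stages: first exhaust elementary simplicial collapses, then contract any edge that is not contained in a triangle.

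First, I would collapse $K$ as far as possible, obtaining a subcomplex $K_1$ with no free faces. Since collapses are homotopy equivalences, $\chi(K_1) = \chi(S)$, the cohomology ring $H^{*}(K_1;\FF_2)$ still satisfies \eqref{eq:cohomprop}, and $|V(K_1)| \leq |V(K)|$. In $K_1$, every edge either lies in at least two triangles or in no triangle at all; call the latter \emph{maximal} edges.

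The key use of \eqref{eq:cohomprop} is to show that every maximal edge of $K_1$ is a bridge of the $1$-skeleton. Given such $e=[u,v]$, the indicator cochain $\alpha = \mathbf{1}_e \in C^1(K_1;\FF_2)$ is automatically a cocycle, because $e$ is in no $2$-simplex, and its cup product with any $\beta$ vanishes on every triangle $[v_0 v_1 v_2]$: the formula $(\alpha \cup \beta)[v_0 v_1 v_2] = \alpha([v_0 v_1])\,\beta([v_1 v_2])$ has first factor zero, since $[v_0 v_1]$ is a face of a triangle and hence non-maximal, so distinct from $e$. Thus $\alpha \cup \beta = 0$ for every $\beta$, and \eqref{eq:cohomprop} forces $[\alpha] = 0$. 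Writing $\alpha = \delta f$ with $f \in C^0$ translates exactly into: $f$ is constant on each component of $K_1 \setminus \{e\}$ but takes different values on the components containing $u$ and $v$, which is the statement that $e$ is a bridge.

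Finally, I would contract each bridge $e = [u,v]$ by identifying $u$ with $v$. The bridge condition forces $u$ and $v$ to have no common neighbour (otherwise an alternate two-step path would keep $K_1 \setminus \{e\}$ connected), so the identification produces a legitimate simplicial complex $K_2$, homotopy equivalent to $K_1$, with one fewer vertex and one fewer maximal edge; a short inspection shows that no new free faces are created. Iterating yields a pure $2$-dimensional complex $K_{\mathrm{fin}}$ without free faces, satisfying $\chi(K_{\mathrm{fin}}) = \chi(S)$ and $|V(K_{\mathrm{fin}})| \leq |V(K)|$. Lemma~\ref{dim2} then gives $|V(K_{\mathrm{fin}})| \geq \rho(S)$, and therefore $|V(K)| \geq \rho(S)$. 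I expect the main technical obstacle to lie in the contraction step---verifying that the identification of bridge endpoints respects the simplicial structure (no duplicated simplices) and that no free faces appear in $K_2$---with both points settled by the no-common-neighbour consequence of being a bridge.
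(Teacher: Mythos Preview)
Your reduction is the same as the paper's: collapse free faces, prove that every surviving maximal edge is a bridge, contract it, iterate, and then invoke Lemma~\ref{dim2}. The one place you diverge is in the proof that a maximal edge $e=\{a,b\}$ must be a bridge. The paper argues by contradiction at the level of homotopy type: if a simple path $P$ joined $a$ to $b$ inside $L\setminus e$, then $L\simeq L/P\simeq T\vee S^{1}$, and the $S^{1}$ summand contributes a nonzero degree-one class with trivial cup products, contradicting \eqref{eq:cohomprop}. You instead work directly at the cochain level: the indicator $\mathbf{1}_e$ is a cocycle with $\mathbf{1}_e\cup\beta=0$ for every $\beta$ (since $e$ lies in no triangle), so \eqref{eq:cohomprop} forces $\mathbf{1}_e=\delta f$, and this equation says precisely that $f$ separates the components of $a$ and $b$ in $L\setminus e$. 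Both arguments are correct and prove the same intermediate claim; yours is more hands-on with the simplicial cup product, the paper's more geometric, but they are two sides of the same observation.
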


\begin{proof}
Collapse every free face of $K$ to get a subcomplex $L$ with no free faces. If $L$ is homogeneous of dimension $2$, we can apply Lemma \ref{dim2} and we are done. We can assume  then that $L$ has maximal edges. We will show that it is always possible to replace $L$ by a homotopy equivalent simplicial complex with less vertices and less maximal edges.  Let $\sigma=\{a,b\}$ be a maximal edge. Suppose that there is a simple path $P$ joining $a$ to $b$ in $L \setminus \sigma$ (the simplicial complex obtained from $L$ by removing the edge $\sigma$). Consider the quotient $L / P$, which is homotopy equivalent to a space of the form $T \vee S^1$ for a suitable space $T$. Since on the other hand $L / P$ is homotopy equivalent to $L$, we have  $L \simeq T \vee S^1$. This is a contradiction because the cohomology ring $H^{*}(L)$ satisfies property $\eqref{eq:cohomprop}$ while $H^{*}(T \vee S^1) \equiv H^{*}(T) \times H^{*}(S^1)$ does not. Then there is no path between $a$ and $b$ in $L \setminus \sigma$ and therefore the quotient $L / \sigma$ has a natural simplicial structure and is homotopy equivalent to $L$. Thus, if we replace $L$ by $L / \sigma$ we obtain a simplicial complex homotopy equivalent to $K$ with less maximal edges than $L$. By iterating this procedure we end up with a homogeneous simplicial complex $\hat K$ of dimension $2$ homotopy equivalent to $K$ with no more vertices than $K$. By Lemma \ref{dim2} applied to $\hat K$, we deduce that $\hat K$ (and in consequence $K$) has at least $\rho(S)$ vertices.
\end{proof}

Suppose now that $K$ is a simplicial complex homotopy equivalent to a surface $S$ and consider its 2-skeleton $K^{(2)}$. It is clear that $\chi\left(K^{(2)}\right) \geq \chi(S)$. Moreover, the cohomology ring $H^{*}\left(K^{(2)}\right)$ satisfies property \eqref{eq:cohomprop} (see Lemma \ref{propstar} below). To prove the main result, we will find a subcomplex $T$ of the 2-skeleton of $K$ such that $H^{*}(T)$ has property \eqref{eq:cohomprop} and $\chi(T) = \chi(S)$ by carefully removing some of the 2-simplices of $K^{(2)}$.

Given a simplicial complex $K$, we will denote by $(C_{*}(K),d_{*})$ its simplicial chain complex with coefficients in $\FF_2$. Whenever is necessary to avoid confusions, we will add a superscript $K$ to the differential (as in $d_n^{K}$).

 In what follows, $K$ will be a simplicial complex homotopy equivalent to a surface $S$ and $Z \in C_2(K)$ will be a fixed 2-chain that generates $H_{2}(K)$. If $L\leq K$ is a subcomplex, we denote by $\dim H_2(L)$ the dimension of $H_2(L)$ as a vector space over $\FF_2$.

\begin{lemma}\label{propstar}
Let $L \leq K$ be a subcomplex such that the inclusion $i:L \hookrightarrow K$ induces isomorphisms $i_{*}:H_{n}(L) \to H_{n}(K)$ for $n < 2$ and an epimorphism for $n = 2$. Then, the cohomology ring $H^{*}(L)$ satisfies property \eqref{eq:cohomprop}.
\end{lemma}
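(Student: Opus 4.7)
The plan is to transfer property \eqref{eq:cohomprop} from $K$ to $L$ via the inclusion-induced ring homomorphism $i^*\colon H^*(K)\to H^*(L)$, using that this homomorphism respects cup products and that the hypotheses force it to be an isomorphism in degrees below $2$ and a monomorphism in degree $2$.

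First, I would translate the homological hypothesis into a cohomological one. Since we are working with $\FF_2$ coefficients, every homology and cohomology group is an $\FF_2$-vector space, and the universal coefficient theorem identifies $H^n(X;\FF_2)$ with $\Hom_{\FF_2}(H_n(X;\FF_2),\FF_2)$ naturally in $X$. Dualizing then gives that $i^*\colon H^n(K)\to H^n(L)$ is an isomorphism for $n<2$ and a monomorphism for $n=2$.

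Next I would use that $K$ is homotopy equivalent to a surface $S$, so $H^*(K)\cong H^*(S)$ satisfies property \eqref{eq:cohomprop} by Remark \ref{property}. Given any nonzero $\alpha\in H^1(L)$, the isomorphism $i^*\colon H^1(K)\to H^1(L)$ provides a unique preimage $\alpha'\in H^1(K)$, which is nonzero. Applying property \eqref{eq:cohomprop} to $\alpha'$ yields $\beta'\in H^1(K)$ with $\alpha'\cup \beta'\neq 0$ in $H^2(K)$.

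Finally, naturality of the cup product gives
\[
i^*(\alpha'\cup \beta')=i^*(\alpha')\cup i^*(\beta')=\alpha\cup i^*(\beta')
\]
in $H^2(L)$. Since $i^*$ is injective on $H^2$ and $\alpha'\cup\beta'\neq 0$, the left-hand side is nonzero, so $\beta:=i^*(\beta')$ witnesses property \eqref{eq:cohomprop} for $\alpha$. There is no real obstacle here; the only point requiring care is the direction of the induced maps, i.e., checking that the surjectivity assumption on $i_*$ in degree $2$ gives an injection (not a surjection) on cohomology, which is exactly what is needed to conclude that the nonzero cup product survives restriction to $L$.
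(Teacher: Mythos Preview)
Your proof is correct and follows essentially the same approach as the paper: both lift a nonzero $\alpha\in H^1(L)$ to $H^1(K)$ via the isomorphism $i^*$, apply property \eqref{eq:cohomprop} there, and push the resulting nonzero cup product back to $H^2(L)$. The only cosmetic difference is that the paper verifies nonvanishing by evaluating on an explicit class $[C]\in H_2(L)$ with $i_*[C]=[Z]$, whereas you phrase the same step as injectivity of $i^*$ on $H^2$ obtained by dualizing the surjectivity of $i_*$.
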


\begin{proof}
Since $i_{*}:H_2(L) \to H_2(K)$ is an epimorphism, we can find a class $[C] \in H_2(L)$ with $i_{*}([C]) = [Z]$. In order to see that the cohomology ring of $L$ satisfies property \eqref{eq:cohomprop}, take $\alpha \in H^{1}\left(L\right)$ a nonzero element. Then $\alpha = i^{*}\left(\hat \alpha\right)$ with $\hat \alpha$ a nonzero element of $H^1(K)$. Since $K$ is homotopy equivalent to a surface, there exists $\hat \beta \in H^{1}(K)$ such that $(\hat\alpha \cup \hat\beta)([Z]) = 1$ (we identify $H^{2}(K)$ with $\Hom(H_{2}(K),\FF_2)$ by the universal coefficient theorem for cohomology). Then,
\[
(i^{*}( \hat \alpha ) \cup i^{*} ( \hat \beta) )[C] = i^{*}( \hat \alpha \cup \hat \beta)[C] = (\hat \alpha \cup \hat \beta)i_{*}([C]) = 
(\hat \alpha \cup \hat \beta)[Z] = 1,
\] 
which proves that $\alpha\cup\beta$ is nonzero.
\end{proof}

\begin{prop}\label{induc}
Let $L \leq K^{(2)}$ be a subcomplex satisfying the hypotheses of Lemma \ref{propstar}. If $\dim H_2(L) > 1$, there is a 2-simplex $\sigma \in L$ such that the inclusion $j:L \setminus \sigma \hookrightarrow K$ also satisfies the hypotheses of Lemma \ref{propstar}, i.e. it induces isomorphisms  $j_{*}:H_{n}(L \setminus \sigma) \to H_{n}(K)$ for $n < 2$ and an epimorphism for $n = 2$.
\end{prop}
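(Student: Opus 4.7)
The plan is to analyze how homology of $L$ changes when a single 2-simplex is removed, via the long exact sequence of the pair $(L, L \setminus \sigma)$, and then to choose $\sigma$ using the hypothesis $\dim H_2(L) > 1$.

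First I would note that, since $L$ is 2-dimensional and we remove only one top-dimensional simplex, the relative chain complex $C_*(L)/C_*(L \setminus \sigma)$ is $\FF_2$ concentrated in degree 2, so $H_2(L, L\setminus\sigma) \cong \FF_2$ and the relative homology vanishes in all other degrees. The long exact sequence therefore reduces to
\[
0 \to H_2(L\setminus \sigma) \to H_2(L) \xrightarrow{\varphi_\sigma} \FF_2 \to H_1(L\setminus \sigma) \to H_1(L) \to 0
\]
together with $H_0(L\setminus\sigma) \cong H_0(L)$. The connecting map $\varphi_\sigma$ sends a 2-cycle $z$ to its coefficient on $\sigma$ (the only 2-cycles are actual chains, since $L$ has no 3-simplices).

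From this sequence I read off the two requirements on $\sigma$. First, for $j_*:H_1(L\setminus\sigma)\to H_1(L)\xrightarrow{\cong}H_1(K)$ to be an isomorphism, $\varphi_\sigma$ must be surjective, i.e.\ \emph{some 2-cycle of $L$ must contain $\sigma$ with nonzero coefficient}. Second, assuming $\varphi_\sigma$ is surjective, $H_2(L\setminus\sigma)$ is identified with the cycles of $L$ not using $\sigma$; for the composition $H_2(L\setminus\sigma)\hookrightarrow H_2(L)\to H_2(K)$ to remain an epimorphism, I need \emph{some cycle not containing $\sigma$ to map to the generator $[Z]$ of $H_2(K)\cong\FF_2$}.

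To find such a $\sigma$, I would use the hypothesis $\dim H_2(L) > 1$ as follows. Since $i_* : H_2(L) \to H_2(K) \cong \FF_2$ is surjective with kernel of dimension $\dim H_2(L) - 1 \geq 1$, I can pick a nonzero $2$-cycle $C'$ with $i_*[C'] = 0$. Pick any 2-simplex $\sigma$ in the support of $C'$; then $\varphi_\sigma(C') = 1$, so the first condition holds. For the second, let $C$ be any 2-cycle with $i_*[C] = [Z]$: if $\sigma$ is not in the support of $C$, then $C$ itself witnesses the condition, and otherwise $C + C'$ is a cycle whose coefficient on $\sigma$ is $1+1=0$ in $\FF_2$ and which still satisfies $i_*[C+C'] = [Z]$. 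The $H_0$ isomorphism is automatic from the exact sequence above.

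There is no real obstacle here beyond being careful with the interpretation of $L\setminus\sigma$ as a subcomplex (so that $C_*(L\setminus\sigma)\subseteq C_*(L)$ differ only in degree 2) and with the observation that $\varphi_\sigma$ literally extracts the $\sigma$-coefficient; the algebraic step of ``adding $C'$ to clear the coefficient'' is the heart of the argument and depends entirely on having $\dim H_2(L) \geq 2$.
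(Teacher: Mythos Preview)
Your argument is correct and follows essentially the same line as the paper's proof: both pick a nonzero $2$-cycle in the kernel of $i_*:H_2(L)\to H_2(K)$ (your $C'$, the paper's $B$), choose $\sigma$ in its support, and then replace $C$ by $C+C'$ if necessary to get a cycle missing $\sigma$ that still hits $[Z]$. The only difference is packaging: you organize the $H_0$ and $H_1$ isomorphisms via the long exact sequence of the pair $(L,L\setminus\sigma)$, while the paper phrases the same fact as ``$\partial\sigma$ bounds in $L\setminus\sigma$'' (since $\sigma$ lies in the support of a $2$-cycle). One small terminological slip: the map you call $\varphi_\sigma$ is the map $H_2(L)\to H_2(L,L\setminus\sigma)$ induced by the quotient, not the connecting homomorphism of the long exact sequence; your description of it as ``coefficient on $\sigma$'' is nonetheless correct.
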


\begin{proof}
By the commutativity of the following diagram
\[
	\begin{tikzcd}
	0 \ar{r} & C_2(L) \ar{r}\ar{d} & C_1(L) \ar{r}\ar{d} & C_0(L) \ar{r}\ar{d} & 0 \\
      & C_2(K) \ar{r} & C_1(K) \ar{r} & C_0(K) \ar{r} & 0,
	\end{tikzcd}
\]
where the vertical arrows are the inclusions induced by $i:L \hookrightarrow K$, we can identify $\ker d_2^{L}$ with a subspace of $\ker d_2^{K}$. Let $C \in \ker d_2^{L} \equiv H_2(L)$ such that $i_{*}[C] = [Z]$. Since $\dim H_2(L) > 1$ and $\ker d_2^{K}$ admits a decomposition of the form $\ker d_2^{K} = \langle Z \rangle \oplus \im d_3^{K}$, there is a nonzero 2-cycle $B \in \ker d_2^{L} \cap \im d_3^{K}$. Let $\sigma$ be a 2-simplex in the support of $B$. By the choice of $\sigma$, the inclusion induces the zero morphism $ H_1(\partial \sigma) \to H_1(L\setminus{\sigma})$. Therefore the inclusion $L \setminus \sigma \hookrightarrow L$ induces isomorphisms $H_n(L \setminus \sigma) \to H_n(L)$ for $n < 2$. It remains to verify that the inclusion $j: L\setminus \sigma \hookrightarrow K$ induces an epimorphism in $H_2$. If $\sigma \not\in \supp C$, clearly $j_{*}[C] = [Z]$. If  $\sigma \in \supp C$, the 2-chain $C + B$ is a well defined 2-cycle in $L \setminus \sigma$ and $j_{*}[C+B] = [i_{*}[C] + B] = [Z]$. Hence, in both cases $j_{*}: H_2(L \setminus \sigma) \to H_2(K)$ is an epimorphism.
\end{proof}

By applying iteratively Proposition \ref{induc} to $L = K^{(2)}$, we end up with a subcomplex $T \leq K^{(2)}$ such that the inclusion $i:T \hookrightarrow K$ induces isomorphisms in all homology groups and, in particular $H^*(T)$ satisfies property \eqref{eq:cohomprop}. The proof of the main theorem follows immediately from this fact.

\begin{theo}\label{main}
Let $K$ be a simplicial complex homotopy equivalent to a surface $S$. Then $K$ has at least $\rho(S)$ vertices. In particular, if $S\neq M_2, N_2, N_3$ then $\ct(S)=\delta(S)$.
\end{theo}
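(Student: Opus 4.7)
The proof should assemble the pieces already in place. The plan is to start with $L = K^{(2)}$, the 2-skeleton of $K$, and argue that the inclusion $K^{(2)} \hookrightarrow K$ satisfies the hypotheses of Lemma \ref{propstar}: it induces isomorphisms on $H_n$ for $n<2$ (since $H_0$ and $H_1$ are determined by the 2-skeleton) and an epimorphism on $H_2$ (since every 2-cycle of $K$ lies in $K^{(2)}$). Now iterate Proposition \ref{induc}: as long as $\dim H_2(L) > 1$, we can delete a 2-simplex from $L$ to obtain a subcomplex $L \setminus \sigma$ still satisfying the hypotheses of Lemma \ref{propstar}. Since $K$ has only finitely many 2-simplices, this process must terminate, producing a subcomplex $T \leq K^{(2)}$ with $\dim H_2(T) = 1$ and with the inclusion $T \hookrightarrow K$ still inducing isomorphisms on $H_n$ for $n < 2$ and an epimorphism on $H_2$.

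Since $H_2(K) \cong H_2(S) \cong \FF_2$ has dimension $1$, the epimorphism $H_2(T) \twoheadrightarrow H_2(K)$ between one-dimensional spaces is in fact an isomorphism. Therefore the inclusion $T \hookrightarrow K$ induces isomorphisms on all homology groups, which yields $\chi(T) = \chi(K) = \chi(S)$. By Lemma \ref{propstar}, the cohomology ring $H^{*}(T)$ satisfies property \eqref{eq:cohomprop}. Since $T$ is a subcomplex of the 2-skeleton $K^{(2)}$, it has dimension at most 2, and Corollary \ref{corodim2} applies: $T$ has at least $\rho(S)$ vertices. But the vertex set of $T$ is contained in the vertex set of $K^{(2)}$, which equals the vertex set of $K$, so $K$ itself has at least $\rho(S)$ vertices, establishing the first assertion.

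For the second assertion, suppose $S \neq M_2, N_2, N_3$, so that $\rho(S) = \delta(S)$. By Lemma \ref{vertices}, $\ct(S)$ equals the minimum number of vertices of a simplicial complex homotopy equivalent to $S$, so the first part of the theorem gives $\ct(S) \geq \delta(S)$. On the other hand, a minimal triangulation of $S$ (guaranteed to exist by Theorem \ref{jungerman}) has exactly $\delta(S)$ vertices and is homotopy equivalent to $S$, so $\ct(S) \leq \delta(S)$, and equality follows.

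Conceptually there is no serious obstacle beyond checking that the iterative procedure terminates at $\dim H_2(T) = 1$ rather than $0$ (so that the epi on $H_2$ is forced to be an isomorphism and $\chi(T) = \chi(S)$), and observing that removing a 2-simplex cannot remove vertices. Both are immediate. The work has essentially been done in Lemmas \ref{dim2} and \ref{propstar} and Proposition \ref{induc}; this theorem is their synthesis.
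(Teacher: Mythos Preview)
Your proposal is correct and follows essentially the same route as the paper: iterate Proposition \ref{induc} starting from $K^{(2)}$ to obtain a subcomplex $T$ on which the inclusion into $K$ induces isomorphisms on all homology groups, so that $\chi(T)=\chi(S)$ and $H^{*}(T)$ has property \eqref{eq:cohomprop}, and then invoke Corollary \ref{corodim2}. Your write-up is more explicit than the paper's (e.g.\ you spell out why the iteration terminates with $\dim H_2(T)=1$ and why the resulting epimorphism is an isomorphism), but the strategy is identical; one tiny remark is that since $H_2(T)\neq 0$ the complex $T$ has dimension exactly $2$, which is what Corollary \ref{corodim2} literally requires.
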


\begin{proof}
By Proposition \ref{induc}  there exists a subcomplex $T \leq K^{(2)}$ with $\chi(T) = \chi(S)$ and such that its cohomology ring satisfies property \eqref{eq:cohomprop}. Since the number of vertices of $T$ is less than or equal to the number of vertices of $K$, by Corollary \ref{corodim2}, $K$ has at least $\rho(S)$ vertices.
\end{proof}

\section{The exceptional cases}

In this section we analyze the covering type of the exceptional surfaces $M_2, N_2$ and $N_3$. By Theorem \ref{main}, for $S = M_2,N_2,N_3$ the covering type of $S$ is between $\delta(S)-1$ and $\delta(S)$. We complete the computation of their covering types by showing that $\ct(N_2) = \delta(N_2)$, $\ct(N_3) = \delta(N_3)$ and exhibiting a simplicial complex on $\delta(M_2)-1$ vertices homotopy equivalent to $M_2$.

Suppose we have a simplicial complex homotopy equivalent to a surface $S$. By Theorem \ref{main} and Proposition \ref{induc}, we can obtain a simplicial complex of dimension $2$ with the homology of $S$ and whose cohomology ring possesses property \eqref{eq:cohomprop}. The next result states, roughly, that such a complex is close to being a surface.

Recall that the link $\lk_K(v)$ of a vertex $v$ of $K$ is the subcomplex of $K$ of simplices $\sigma$ disjoint with $v$ such that $\sigma \cup \{v\}\in K$.

\begin{prop}\label{homeosurf}
Let $K$ be a pure 2-complex such that each edge of $K$ is the face of exactly 2 triangles and let $S$ be a surface. Suppose there is a continuous map $f:K \to S$ that induces isomorphisms in all homology groups. Then $K$ is homeomorphic to $S$.
\end{prop}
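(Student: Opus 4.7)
The plan is to show that $K$ is actually a closed topological $2$-manifold, after which the classification of closed surfaces (together with invariants readable from $H^{*}(K;\FF_2)$) identifies $K$ with $S$. The hypothesis that every edge of $K$ is a face of exactly two triangles forces each link $\lk_K(w)$ to be a $1$-complex in which every vertex has degree $2$, hence a disjoint union of cycles; the task thus reduces to showing that each link is connected.

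First I would observe that $K$ is strongly connected. Since $K$ has no $3$-simplices, $H_2(K) = \ker \partial_2$; the edge-hypothesis forces the support of any $2$-cycle to be closed under edge-adjacency of triangles, so $\dim H_2(K)$ equals the number of strong components. Because $f_{*}$ is an isomorphism and $\dim H_2(S) = 1$, $K$ has a single strong component. Now assume for contradiction that some vertex $v$ has link $L_1 \sqcup \cdots \sqcup L_c$ with $c \geq 2$. Simultaneously split every such \emph{bad} vertex into one new vertex per link component to form $K'$. Then each link in $K'$ is a single cycle, every edge remains in exactly two triangles, and strong connectedness is inherited, so $K'$ is a connected closed $2$-manifold. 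The quotient map $q: K' \to K$ identifies, for each bad vertex $v$ of $K$, its $c_v$ preimages. Since collapsing $c$ points of a path-connected CW-complex to a single point is homotopy-equivalent to wedging on $c-1$ circles, one gets
\[
K \;\simeq\; K' \vee \bigvee^{N} S^1, \qquad N \;=\; \sum_{v \text{ bad}} (c_v - 1) \;\geq\; 1.
\]

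Next I would contradict property \eqref{eq:cohomprop}. Because $f_{*}$ is an isomorphism on $\FF_2$-homology, the universal coefficient theorem together with naturality of the cup product makes $f^{*}$ a ring isomorphism, so $H^{*}(K)$ inherits property \eqref{eq:cohomprop} from $H^{*}(S)$. But in $K' \vee \bigvee^N S^1$ any nonzero class $\alpha \in H^1$ coming from one of the new $S^1$ summands satisfies $\alpha \cup \beta = 0$ for every $\beta \in H^1(K)$, since cross cup products across wedge factors vanish and $H^2(S^1) = 0$. This contradicts property \eqref{eq:cohomprop}, so no bad vertex exists and $K$ itself is a closed $2$-manifold.

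It remains to identify $K$ with $S$. Both are closed $2$-manifolds whose $\FF_2$-cohomology rings are isomorphic via $f^{*}$. Matching Betti numbers give $\chi(K) = \chi(S)$, and orientability over $\FF_2$ is detected by the cup square on $H^1$: by Wu's formula $\alpha \cup \alpha = w_1 \cup \alpha$, so the surface is orientable iff $\alpha \cup \alpha = 0$ for every $\alpha \in H^1$. Thus $K$ and $S$ agree in both Euler characteristic and orientability, and by the classification of closed surfaces $K \cong S$. The main obstacle I anticipate is a careful verification of the wedge-of-circles decomposition after the simultaneous resolution at several bad vertices; this should however be routine CW-bookkeeping once the combinatorics of the splitting is set up.
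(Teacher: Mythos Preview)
Your proof is correct and follows essentially the same route as the paper: establish strong connectedness from $\dim H_2=1$, use a wedge-of-circles decomposition at a bad vertex to contradict property~\eqref{eq:cohomprop}, and finish with the classification of surfaces. The only differences are cosmetic---you resolve all bad vertices simultaneously rather than one at a time, and you spell out the Euler-characteristic/orientability check (via the cup-square) that the paper leaves implicit in its final sentence.
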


\begin{proof}
Each strongly connected component $C$ of $K$ is a pseudosurface without boundary, so that $H^2(C) \equiv \FF_2$. Since two different strongly connected components of $K$ may intersect only at vertices, the dimension of $H^2(K)$ coincides with the number of its strongly connected components. Hence, $K$ is strongly connected. Since every edge of $K$ is the face of exactly two 2-simplices, the link of every vertex is a 2-regular graph and hence, homeomorphic to a disjoint union of some copies of $S^{1}$. Suppose there is a vertex such that its link has $k > 1$ connected components. It is easy to see that in this case $K \simeq L \vee_{i=1}^{k-1} S^{1}$ for a suitable 2-complex $L$. Since the map $f:K \to S$ induces an isomorphism in cohomology, $H^*(K)$ satisfies property \eqref{eq:cohomprop} while $H^*(L \vee_{i=1}^{k-1} S^{1})$ does not. Hence, the link of every vertex of $K$ is homeomorphic to $S^{1}$. This implies that $K$ is a surface. By the classification of surfaces, $K$ is homeomorphic to $S$. 
\end{proof}

We are now ready to compute the covering type of $N_2$ and $N_3$.

\begin{prop}
Let $S = N_2$ or $N_3$. Then $\ct(S) = \delta(S)$.
\end{prop}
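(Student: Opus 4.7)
Since $\ct(S)\le\delta(S)$ holds trivially and Theorem \ref{main} gives $\ct(S)\ge\rho(S)=\delta(S)-1$, the plan is to rule out, for $S\in\{N_2,N_3\}$, the existence of a simplicial complex $K\simeq S$ on exactly $\rho(S)$ vertices. Given such a $K$, I would first apply Proposition \ref{induc} iteratively to $K^{(2)}$ to obtain a subcomplex $T\le K^{(2)}$ whose inclusion induces isomorphisms in all homology groups, and then perform the collapse and edge-quotient process from the proof of Corollary \ref{corodim2}. The output is a pure $2$-dimensional complex $\hat K$ with no free faces and no maximal edges, at most $\rho(S)$ vertices, $\chi(\hat K)=\chi(S)$, $\dim H_2(\hat K)=1$, $H^*(\hat K)\cong H^*(S)$ as graded $\FF_2$-rings, together with a continuous map $\hat K\to S$ (the composition $\hat K\simeq T\hookrightarrow K\simeq S$) inducing isomorphisms in all homology groups. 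The counting from Lemma \ref{dim2} then forces $\alpha_0(\hat K)=\rho(S)$.

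For $S=N_2$, the inequality $6\chi(S)\ge\alpha_0(7-\alpha_0)$ is saturated at $\alpha_0=7$, pinning down $\hat K$ to have the complete $1$-skeleton $K_7$ with every edge in exactly two triangles. Proposition \ref{homeosurf} then makes $\hat K$ a closed surface of Euler characteristic $0$; since $\delta(N_2)=8$, a $7$-vertex triangulation cannot realise $N_2$, so $\hat K$ is homeomorphic to the torus $M_1$. But the quadratic map $\alpha\mapsto\alpha\cup\alpha$ from $H^1(\cdot;\FF_2)$ to $H^2(\cdot;\FF_2)$ vanishes identically on $M_1$ while being nonzero on $N_2$, contradicting the ring identification $H^*(\hat K)\cong H^*(N_2)$.

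For $S=N_3$, the same counting yields $\alpha_0=8$ with two possible face-number profiles. If $(\alpha_1,\alpha_2)=(27,18)$ then every edge lies in exactly two triangles and Proposition \ref{homeosurf} forces $\hat K$ to triangulate $N_3$ on $8<\delta(N_3)=9$ vertices, which is impossible. If instead $(\alpha_1,\alpha_2)=(28,19)$ then the $1$-skeleton is the complete graph $K_8$, and the excess $3\alpha_2-2\alpha_1=1$ places exactly one edge $\{u,v\}$ in three triangles (with all others in two); the link $\lk_{\hat K}(v)$ is then a graph on the remaining seven vertices in which $u$ has degree three and the other six vertices each have degree two, yielding the odd degree sum $15$ and violating the handshake lemma. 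The main obstacle is this second subcase, where the inequality $3\alpha_2\ge 2\alpha_1$ is not tight so Proposition \ref{homeosurf} does not apply; the contradiction has to be extracted from a local parity count at a vertex link. The $N_2$ case, by contrast, relies on the ring-level distinction between $H^*(M_1;\FF_2)$ and $H^*(N_2;\FF_2)$.
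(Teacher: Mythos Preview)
Your proposal is correct and follows essentially the same strategy as the paper: reduce to a pure $2$-complex with no free faces, count faces, and invoke Proposition~\ref{homeosurf} together with a parity argument at a vertex link for the non-tight $N_3$ subcase. The only difference is that for $N_2$ the paper applies Proposition~\ref{homeosurf} in full to conclude $\hat K\cong N_2$ directly (contradicting $\delta(N_2)=8$), so your detour through $\hat K\cong M_1$ and the cup-square distinction, while correct, is unnecessary.
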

\begin{proof}
Let $K$ be a simplicial complex on $\ct(S)$ vertices such that there is a homotopy equivalence $f:K\to S$. By Proposition \ref{induc}, there is a subcomplex $L \leq K^{(2)}$ such that the map $L \hookrightarrow K \xrightarrow[]{f} S$ induces an isomorphism in homology. By collapsing the free faces of $L$ and proceeding as in Corollary \ref{corodim2} we can assume further that $L$ is pure and has no free faces. Let $\alpha_0$, $\alpha_1$, $\alpha_2$ denote the number of vertices, edges and triangles of $L$ respectively. By a straightforward computation of the Euler characteristic,
\[ 3(\alpha_0 - \chi(S)) \leq \alpha_1 \leq { {\alpha_0} \choose {2} }.\]
This inequality immediately implies that $\ct(N_2) = \delta(N_2) = 8$. Indeed, if  $\ct(N_2) = 7$, we would have $\alpha_1 = 21$ and $\alpha_2 = 14$. Since $3\alpha_2 = 2 \alpha_1$ every edge of $L$ is a face of exactly two triangles. By Proposition \ref{homeosurf}, $L$ is homeomorphic to $N_2$, a contradiction. Hence $\ct(N_2) = 8$.

Consider now the case $S = N_3$. Suppose that $\ct(N_3) = \delta(N_3) - 1 = 8$. Then $\alpha_1 = 27$ or $28$. As before, if $\alpha_1 = 27$ then $L$ should be homeomorphic to $N_3$ and this is impossible.  If $\alpha_1 = 28$, every edge of $L$ is the face of 2 triangles except for one edge that is contained in 3 triangles. Let $v \in L$ be a vertex of this edge. The link of $v$ is a graph in which exactly one vertex has degree 3 and every other vertex is of degree 2. Since the sum of the degrees of all vertices should be even, this is a contradiction. Hence $\ct(N_3) = \delta(S) = 9$ and the proof is complete.
\end{proof}

\begin{lemma}
Let $K$ be a simplicial complex of dimension $2$. Suppose there are two vertices $v, v' \in K$ not connected by an edge and with disjoint links. Suppose further that there are vertices $w \in \lk_{K} (v)$, $w' \in \lk_{K}(v')$ connected by an edge in $K$. Then, the complex $(K / (v \sim v')) \cup \sigma$ has a natural simplicial structure and is homotopy equivalent to $K$, where $\sigma$ is the 2-simplex $\{[v],w,w'\}$  (here $[v]$ denotes the class of the vertex $v$ in the quotient complex).
\end{lemma}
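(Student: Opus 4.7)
The plan is to first verify the simplicial structure of $L := (K/(v \sim v')) \cup \sigma$ and then to establish $L \simeq K$ by interpolating through an auxiliary CW complex.

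The simplicial structure is immediate from the hypotheses. The absence of the edge $\{v,v'\}$ prevents a degenerate loop from appearing in the quotient $K/(v\sim v')$, and the disjointness $\lk_K(v) \cap \lk_K(v') = \emptyset$ guarantees that no two distinct simplices of $K$ get identified under the quotient map $q\colon K \to K/(v\sim v')$, so $K/(v \sim v')$ is simplicial. The three putative edges of $\sigma$, namely $\{[v],w\}$, $\{w,w'\}$, $\{[v],w'\}$, are $q$-images of the edges $\{v,w\}, \{w,w'\}, \{v',w'\}$ of $K$, so $\sigma$ can be added consistently as a new $2$-simplex.

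For the homotopy equivalence, let $P$ denote the simplicial arc $v - w - w' - v'$ in $K$ (three edges), and build an auxiliary CW complex
\[
\tilde L \;=\; K \cup e \cup D,
\]
where $e$ is a fresh $1$-cell with endpoints $v, v'$ and $D$ is a $2$-cell whose attaching map sends $\partial D \cong S^{1}$ around the loop $P \cup e$, with $\partial D$ subdivided into four arcs mapping bijectively onto the four edges of $P \cup e$. I would then argue (a) $\tilde L \simeq K$, by noting that $D$ strongly deformation retracts onto the proper sub-arc $P \subset \partial D$, so that extending by the identity on $K$ sweeps the $e$-arc of $\partial D$ (which is identified with $e$) onto $P$ and gives a strong deformation retraction $\tilde L \to K$; and (b) $\tilde L / e = L$ literally, because collapsing the arc $e$ identifies $v$ with $v'$ so that the $1$-skeleton becomes exactly that of $K/(v\sim v')$, while the cell $D/e$ is still a topological disk (collapsing a sub-arc of $\partial D^{2}$ preserves the disk) whose boundary is now the triangular loop $[v] - w - w' - [v]$, making $D/e$ coincide with the simplex $\sigma$. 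Since $e$ is a contractible subcomplex one also has $\tilde L / e \simeq \tilde L$, and chaining these equivalences yields $L \simeq K$.

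The delicate step is (b). The auxiliary complex $\tilde L$ is genuinely a CW complex rather than a simplicial one: converting $D$ into two $2$-simplices would require a diagonal such as $\{v,w'\}$, which the disjoint-link hypothesis actually rules out. One therefore has to confirm the identification $\tilde L/e = L$ at the CW level and, in particular, check that no unexpected cell coincidences arise under the collapse of $e$ -- which is once more where the hypotheses (disjoint links together with the absence of the edge $\{v,v'\}$) are essential.
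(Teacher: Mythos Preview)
Your argument is correct. The verification that $K/(v\sim v')$ carries a simplicial structure matches the paper's reasoning exactly (no loops because $\{v,v'\}\notin K$; no doubled edges because the links are disjoint). For the homotopy equivalence the paper offers nothing beyond the sentence ``It is not difficult to verify that if we add the 2-simplex $\sigma$ \ldots\ we get a simplicial complex homotopy equivalent to $K$,'' so your explicit interpolation through $\tilde L = K \cup e \cup D$ supplies the missing content. Both steps are sound: since $e$ is a free face of $D$ in $\tilde L$, the pair $(\tilde L, K)$ admits the deformation retraction you describe (equivalently, $\tilde L = K \cup_P \bar D$ with $P$ a proper arc in $\partial\bar D$); and collapsing the contractible CW-subcomplex $e$ gives $\tilde L/e \simeq \tilde L$, while $\tilde L/e \cong L$ because the hypotheses ensure that identifying the endpoints of $e$ produces exactly $K/(v\sim v')$ with no further coincidences and turns $D$ into the $2$-simplex $\sigma$. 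This is precisely the sort of argument one would write to justify the paper's unproved assertion, and it is arguably cleaner than trying to build a direct map.
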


\begin{proof}
First note that the quotient complex $L = K / (v \sim v')$ inherits a simplicial structure. Since $v$ and $v'$ do not form an edge in $K$, the edges of the $1$-skeleton of $L$ connect different vertices. Moreover, since the links of $v$ and $v'$ are disjoint in $K$, there is at most one edge between two given vertices in the $1$-skeleton of $L$. Finally, it is clear that the 2-simplices of $L$ may intersect only on a vertex or an edge. It is not difficult to verify that if we add the 2-simplex $\sigma=\{[v],w,w'\}$ to $L$ we get a simplicial complex homotopy equivalent to $K$.
\end{proof}

\begin{prop}
There is a simplicial complex of dimension 2 with 9 vertices homotopy equivalent to $M_2$. In particular, $\ct(M_2) = 9$.
\end{prop}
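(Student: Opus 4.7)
By Theorem~\ref{main} we already have $\ct(M_2) \geq \rho(M_2) = 9$, so the task reduces to exhibiting a simplicial $2$-complex on $9$ vertices homotopy equivalent to $M_2$. The preceding lemma is tailored precisely for this: given a simplicial $2$-complex $K \simeq M_2$, it allows us to collapse a pair of non-adjacent vertices $v, v'$ with disjoint links, provided their links are joined by some edge $\{w, w'\}$, producing a complex
\[
L = (K/(v \sim v')) \cup \{[v], w, w'\}
\]
with one fewer vertex and the same homotopy type. My plan is therefore to start from a minimal $10$-vertex triangulation $K$ of $M_2$ (taken from the classification of Jungerman--Ringel \cite{JR} or from Lutz's census \cite{Lut}), identify a pair $(v, v')$ in $K$ to which the lemma applies, and perform a single collapse to produce the desired $9$-vertex complex $L$. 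Combined with Theorem~\ref{main} this will yield $\ct(M_2) = 9$.

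The main obstacle is exhibiting the pair $(v, v')$. A $10$-vertex triangulation of $M_2$ has $\alpha_1 = 3(10 - \chi(M_2)) = 36$ edges and mean vertex degree $7.2$, while the disjointness condition $\lk_K(v) \cap \lk_K(v') = \emptyset$ forces $\deg_K(v) + \deg_K(v') \leq 8$; both chosen vertices must therefore have unusually small degree. I would accordingly scan the list of minimal $10$-vertex triangulations of $M_2$ for one containing two low-degree vertices at combinatorial distance $\geq 2$ whose (disjoint) links admit a connecting edge. If no triangulation in the classification offers such a pair directly, I would first modify $K$ by a sequence of edge flips (bistellar $1$-moves, which preserve the PL type of $K$) to reduce one vertex's degree and engineer the required configuration, and then apply the lemma.

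Once the pair $(v, v')$ and connecting edge $\{w, w'\}$ are in hand, the resulting $9$-vertex complex $L$ can be described explicitly by listing its $2$-simplices, and the homotopy equivalence $L \simeq K \simeq M_2$ is immediate from the lemma. This completes the construction and, together with the lower bound from Theorem~\ref{main}, shows $\ct(M_2) = 9$.
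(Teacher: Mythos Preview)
Your approach matches the paper's exactly: apply the preceding lemma to a minimal $10$-vertex triangulation of $M_2$ to drop one vertex. What you leave as a search the paper settles by direct inspection of the Jungerman--Ringel triangulation $T$: its $1$-skeleton already contains two non-adjacent vertices $v,v'$ of degree $4$ with disjoint links, and the remaining eight vertices span the complete graph $K_8$, so any choice of $w\in\lk_T(v)$ and $w'\in\lk_T(v')$ is automatically joined by an edge. No scan of a census and no bistellar modification is needed; the lemma applies to $T$ as it stands.
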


\begin{proof}
Consider the minimal triangulation $T$ of $M_2$ described in \cite{JR} (see also \cite{KW}). The 1-skeleton of the simplicial complex $T$ is a graph on 10 vertices such that:
\begin{itemize}
	\item There are two vertices $v,v'$ of degree $4$ that do not form an edge and with disjoint links.
    \item The subgraph induced by the remaining 8 vertices is the complete graph $K_8$.
\end{itemize}
Choose vertices $w \in \lk_{T}(v)$, $w' \in \lk_{T}(v')$. By the previous Lemma, the simplicial complex $T / (v \sim v') \cup \{[v],w,w'\}$ is homotopy equivalent to $T \equiv M_2$.
\end{proof}

\end{document}